\newcommand{\NS}{{\rm NS}}
\newcommand{\Trans}{{\rm Trans}}
\newcommand{\Inv}{{\rm T}}
\newcommand{\Orth}{{\rm S}}
\newcommand{\diag}{{\rm diag}}
\newcommand{\Tr}{{\rm Tr}}
\newcommand{\Norm}{{\rm N}}
\newcommand{\Aut}{{\rm Aut}}
\DeclareMathOperator{\rank}{rank}
\DeclareMathOperator{\id}{id}
\DeclareMathOperator{\Hom}{Hom}
\newcommand{\coloneqq}{:=}
\newcommand{\IC}{\mathbb{C}}
\newcommand{\IN}{\mathbb{N}}
\newcommand{\IP}{\mathbb{P}}
\newcommand{\IR}{\mathbb{R}}
\newcommand{\IQ}{\mathbb{Q}}
\newcommand{\IZ}{\mathbb{Z}}
\newcommand{\cM}{\mathcal{M}}
\newcommand{\lra}{\longrightarrow}
\theoremstyle{plain}
\newtheorem{theorem}{Theorem}[section]
\newtheorem{lemma}[theorem]{Lemma}
\newtheorem{proposition}[theorem]{Proposition}
\newtheorem{corollary}[theorem]{Corollary}
\theoremstyle{definition}
\theoremstyle{remark}
\newtheorem{remark}[theorem]{Remark}
\newtheorem{example}[theorem]{Example}
\begin{document}

\title[Isometries of ideal lattices and hyperk\"ahler manifolds]{Isometries of ideal lattices \\and hyperk\"ahler manifolds}

\author[S. Boissi\`ere]{Samuel Boissi\`ere}
\address{Samuel Boissi\`ere, Laboratoire de Math\'ematiques et Applications, UMR CNRS 6086, Universit\'e de Poitiers, T\'el\'eport 2, Boulevard Marie et Pierre Curie, F-86962 Futuroscope Chasseneuil}
\email{samuel.boissiere@math.univ-poitiers.fr}
\urladdr{http://www-math.sp2mi.univ-poitiers.fr/$\sim$sboissie/}

\author[C. Camere]{Chiara Camere}
\address{Chiara Camere, Leibniz Universit\"at Hannover,
Institut f\"ur Algebraische Geometrie,
Welfengarten 1
30167 Hannover, Germany} 
\email{camere@math.uni-hannover.de}
\urladdr{http://www.iag.uni-hannover.de/~camere}

\author[G. Mongardi]{Giovanni Mongardi}
\address{Giovanni Mongardi, Universit\'{a} degli studi di Milano, Dipartimento di Matematica, via Cesare Saldini 50 20133 Milano, Italy }
\email{giovanni.mongardi@unimi.it}

\author[A. Sarti]{Alessandra Sarti}
\address{Alessandra Sarti, Laboratoire de Math\'ematiques et Applications, UMR CNRS 6086, Universit\'e de Poitiers, T\'el\'eport 2, Boulevard Marie et Pierre Curie, F-86962 Futuroscope Chasseneuil}
\email{alessandra.sarti@math.univ-poitiers.fr}
\urladdr{http://www-math.sp2mi.univ-poitiers.fr/$\sim$sarti/}

\date{\today}

\subjclass{Primary 14J50; Secondary 14C50,55T10}

\keywords{holomorphic symplectic manifolds, non-symplectic automorphisms, ideal lattices, cyclotomic fields}

\begin{abstract} We prove that there exists a holomorphic symplectic manifold deformation equivalent to the Hilbert scheme of two points on a K3 surface that admits a non-symplectic automorphism of order $23$, that is the maximal
possible prime order in this deformation family. The proof uses the theory of ideal lattices in cyclomotic fields.
\end{abstract}

\maketitle


\section{Introduction}\label{s:intro}

The study of automorphisms on deformation families of hyperk\"ahler manifolds is a recent and very active 
 field of research. One of the main objective in the recent published papers concerns the classification 
of prime order automorphisms: fixed locus, moduli spaces and deformations. We refer for instance to \cite{BCS,Mongardi,OW} and references therein for a more complete picture. The purpose of this paper is to answer a question
of \cite{BNWS} concerning the existence of automorphisms of order $23$.

Let $X$ be an irreducible holomorphic symplectic manifold. Its second cohomology group $H^2(X,\IZ)$
is an integral lattice for the Beauville--Bogomolov--Fujiki quadratic form ~\cite{Beauville}. Let $f$ be a holomorphic
automorphism of $X$ of prime order $p$ acting non-symplectically: $f$ acts on $H^{2,0}(X)$ by multiplication
by a primitive $p$-th root of the unity. Such automorphisms can exist only when $X$ is projective. 
It follows that the invariant lattice $\Inv(f)\subset H^2(X,\IZ)$  is a primitive sublattice of the N\'eron--Severi group $\NS(X)$
and consequently the characteristic polynomial of the action of $f$ on the transcendental lattice $\Trans(X)$ is a multiple $k$ of the $p$-th cyclotomic polynomial $\Phi_p$. Thus
$k\varphi(p)=\rank_\IZ\Trans(X)$ and in particular 
$$
\varphi(p)\leq b_2(X)-\rho(X),
$$
where $\varphi$ is the
Euler function and $\rho(X)=\rank_\IZ\NS(X)$ is the Picard number of~$X$. 
Assume that $X$ is in the deformation class of the Hilbert scheme of two points on a projective K3 surface (an IHS-$K3^{[2]}$ for short). Since $b_2(X)=23$,
the maximal order for $f$ is $p=23$ and this can happen only when $\rho(X)=1$. The main result of this paper is:

\begin{theorem} 
There exists an IHS-$K3^{[2]}$ with a non-symplectic automorphism of order $23$. 
\end{theorem}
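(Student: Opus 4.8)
The plan is to translate the problem into the construction of a lattice isometry and then to invoke the surjectivity of the period map and the Torelli theorem for manifolds of $K3^{[2]}$-type. Write $L\coloneqq U^{\oplus 3}\oplus E_8(-1)^{\oplus 2}\oplus\langle -2\rangle$ for the Beauville--Bogomolov lattice; it has signature $(3,20)$ and discriminant group $\IZ/2\IZ$. As recalled above, a non-symplectic automorphism of order $23$ forces $\rho(X)=1$, the invariant lattice $\Inv(f)$ to be a positive rank-one sublattice of $\NS(X)$, and the action on $\Trans(X)$ to have characteristic polynomial $\Phi_{23}$. Conversely, it suffices to produce an isometry $g\in O(L)$ of order $23$ such that $L^g\coloneqq\ker(g-\id)$ has rank one and positive square. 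Indeed $g$ then acts on $L\otimes\IC$ with eigenvalue $1$ on $L^g\otimes\IC$ and each primitive root of unity of order $23$ with multiplicity one; taking as period the eigenline $\IC\omega$ for the primitive root $\zeta$ corresponding to the real place singled out below, $g$-equivariance gives $\omega^2=0$ and the signature count gives $\omega\bar\omega>0$, so $[\omega]$ lies in the period domain. By surjectivity of the period map there is a marked $X$ of $K3^{[2]}$-type realizing $[\omega]$; since $\omega^\perp\cap L=L^g$ (a nonzero vector of the ideal lattice below has nonzero image under every embedding of the cyclotomic field), one gets $\NS(X)=L^g$, hence $\rho(X)=1$ and $X$ is projective. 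Finally $g$ is a Hodge isometry lying in $\mathrm{Mon}^2(X)=O^+(L)$ (the discriminant condition is automatic since $A_L=\IZ/2\IZ$, and $g$ preserves the orientation of the positive cone, having odd order and acting by a rotation on the positive $2$-plane) which preserves the Kähler cone because $\rho(X)=1$ and $g$ is trivial on $\NS(X)$; by the Torelli theorem $g=f^*$ for an automorphism $f$ of $X$, necessarily of order $23$ and non-symplectic.

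To build $g$ I would use the ideal-lattice construction. Put $K\coloneqq\IQ(\zeta_{23})$, $K_0\coloneqq\IQ(\zeta_{23})^+$, and let $M$ be the rank-$22$ lattice on which $g$ must act with characteristic polynomial $\Phi_{23}$; then $M$ is a rank-one $\IZ[\zeta_{23}]$-module, i.e. (up to isometry) a fractional ideal $\mathfrak a\subset K$, and every $g$-invariant form on it has the shape $b_\lambda(x,y)=\Tr_{K/\IQ}(\lambda x\bar y)$ for some $\lambda\in K_0^\times$, with $g$ equal to multiplication by $\zeta_{23}$. The requirements become arithmetic: $(M,b_\lambda)$ has signature $(2,20)$ exactly when $\lambda$ is positive at one of the eleven real places of $K_0$ and negative at the other ten (over $\IR$ the form is $\sum_v 2v(\lambda)|z_v|^2$ on $\IC^{11}$); $b_\lambda$ is integral when $\lambda\,\mathfrak a\,\bar{\mathfrak a}\subseteq\mathfrak d_{K/\IQ}^{-1}=\mathfrak p^{-21}$, where $\mathfrak p=(1-\zeta_{23})$ is the unique prime above $23$; and $b_\lambda$ is automatically even because $\Tr_{K/\IQ}$ equals $2\Tr_{K_0/\IQ}$ on arguments in $K_0$. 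A convenient choice is $\mathfrak a=\mathfrak p^{-10}$ with $\lambda$ a unit of $\mathcal{O}_{K_0}$, for which $M^\vee/M\cong\IZ[\zeta_{23}]/\mathfrak p\cong\IF_{23}$; this gives an even lattice of signature $(2,20)$ with discriminant group $\IZ/23\IZ$ and discriminant form $x\mapsto\beta x^2/23$ whose class depends only on the square class of $\lambda$ at $\mathfrak p_0=\mathfrak p\cap K_0$.

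Next I would glue $M$ into $L$. A computation of discriminant forms shows that an $M$ as above with $\beta$ a non-residue modulo $23$ admits a primitive embedding into $L$ with orthogonal complement the rank-one lattice $\langle 46\rangle$, the glue being the diagonal copy of $\IZ/23\IZ$ in $A_{\langle 46\rangle}\oplus A_M$, which leaves residual form $q_L$ on $\IZ/2\IZ$; by Nikulin's theorems such an embedding exists, and $M$, being indefinite of rank $\geq 3$ with $\ell(A_M)=1$, is unique in its genus. Since $23$ is prime to $|O(q_M)|$ the isometry $g$ of $M$ acts trivially on $A_M$ and hence extends by the identity on $\langle 46\rangle$ to an isometry $g\in O(L)$ of order $23$ with $L^g=\langle 46\rangle$. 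Feeding this $g$ into the first paragraph produces the desired $X$ with $\NS(X)\cong\langle 46\rangle$.

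The main obstacle is the existence of the arithmetic datum $\lambda$: one must exhibit a unit of $\IQ(\zeta_{23})^+$ (more precisely, an element with trivial ideal away from $\mathfrak p_0$) that is positive at exactly one of the eleven real places and whose square class at $\mathfrak p_0$ realizes a non-residue modulo $23$, and it is not a priori clear that the unit group — or the available approximation — permits this combination of conditions. I expect this to be handled through the local--global theory of trace forms of ideal lattices (equivalently, of Hermitian forms over $\IQ(\zeta_{23})$): one reduces to checking that the genus of $\langle 46\rangle\oplus M$ with the prescribed local invariants is non-empty at every place — the only non-trivial ones being $\infty$ and $23$ — together with the product formula. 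The place $23$, where $\Phi_{23}\equiv(x-1)^{22}\pmod{23}$ is totally ramified and the discriminant form is concentrated, is the delicate case.
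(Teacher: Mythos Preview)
Your outline follows the paper's route almost exactly: realise the rank-$22$ coinvariant lattice as an ideal lattice in $\IQ(\zeta_{23})$ via a trace form $b_\lambda$, glue it to $\langle 46\rangle$ to get an order-$23$ isometry of $L$ with invariant sublattice $\langle 46\rangle$, then use surjectivity of the period map and the global Torelli theorem. Your auxiliary arguments (that $g$ acts trivially on $A_M$ because $23\nmid|O(q_M)|$, that $g\in O^+(L)$ because it has odd order, that $\NS(X)=L^g$ because no nonzero algebraic integer vanishes under an embedding, and that $\langle\omega,\bar\omega\rangle>0$ follows from the sign pattern of $\lambda$) are all correct and in places slicker than the paper's explicit computations.

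The one genuine gap is precisely the point you flag as the ``main obstacle'': you do not actually produce $\lambda$. The paper does not appeal to local--global theory here; it solves the problem by an explicit search. It first finds (via MAGMA) a single $\alpha_0\in K_0$ with $N_{K_0/\IQ}(\alpha_0)=23^{-10}$ (equivalently, it takes $I=O_K$ rather than your $\mathfrak p^{-10}$, which amounts to the same thing since $\mathfrak p$ is principal), then computes (via SAGE) a basis of the rank-$10$ unit group $O_{K_0}^\ast$, and finally searches over products $\alpha_0\epsilon_1^{\nu_1}\cdots\epsilon_{10}^{\nu_{10}}$ until it hits one with exactly one positive real embedding and the correct discriminant form $\IZ/23\IZ(-2/23)$. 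The resulting Gram matrix is displayed explicitly. Your proposed alternative via Hasse--Minkowski for trace forms is plausible, but you would still need to verify non-emptiness of the relevant genus at the ramified place and check the product formula, which you have not done; as written, the existence of $\lambda$ remains an assertion.
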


We show in \S\ref{s:ihs} that the N\'eron--Severi group of $X$ has rank one, generated by an ample line bundle of square $46$ with respect to the Beauville--Bogomolov--Fujiki quadratic form: up to now there does not exist any geometric construction of such an IHS-$K3^{[2]}$ (see \cite{OGrady}). 
We emphasize that such an automorphism can not exist on the Hilbert scheme of two points on a K3 surface since this has
Picard number two.

The strategy of the proof consists
in constructing an isometry of order $23$ of the lattice $E_8^{\oplus 2}\oplus U^{\oplus 3}\oplus\langle -2\rangle$ with the required properties (Corollary ~\ref{cor:isometryL}) and then to
use the surjectivity of the period map and the global Torelli theorem to construct the variety with its automorphism (Theorem~\ref{th:main}).
Assuming that such an automorphism does exist, the invariant lattice $T$ and its orthogonal complement $S$
 are uniquely determined up to isometry so the main step (Proposition~\ref{prop:isometryS}) 
consists in constructing an order $23$ isometry on the lattice $S$: we obtain it by proving that the lattice $S$ can be  realized 
as an ideal lattice in the $23$rd cyclotomic field, using results of Bayer--Fluckiger~\cite{Bayer1,Bayer2,Bayer3}. 

\section{Preliminaries on lattices}\label{s:lattice}

A {\it lattice} $L$ is a free $\IZ$-module equipped with a nondegenerate symmetric bilinear form
$\langle \cdot, \cdot\rangle_L$ with integer values.  Its {\it dual lattice} is 
$L^{\vee}\coloneqq\Hom_{\IZ}(L,\IZ)$. It can be also described as follows:
$$
L^{\vee}\cong\{x\in L\otimes \IQ~|~\langle x,v\rangle_L\in \IZ\quad \forall v\in L\}.
$$
Clearly $L$ is a sublattice of $ L^{\vee}$ of the same rank, so the \emph{discriminant group} ${A_L:=L^{\vee}/L}$ is a finite abelian group  whose order is denoted $d_L$ and called the {\it discriminant of $L$}. In a basis $(e_i)_i$ of~$L$, for the Gram matrix 
$M\coloneqq(\langle e_i,e_j\rangle_L)_{i,j}$ one has $d_L=|\det(M)|$.

A lattice $L$ is called \emph{even} if $\langle x,x\rangle\in 2\IZ$ for all $x\in L$.  In this case   
the bilinear form induces a  quadratic form $q_L: A_L\lra \IQ/2\IZ$. Denoting by $(s_{(+)},s_{(-)})$ the signature of
$L\otimes\IR$, the triple of invariants $(s_{(+)},s_{(-)},q_L)$ characterizes the \emph{genus} of the even lattice $L$ (see \cite{BCS,Dolgachev} and references therein).

A lattice $L$ is called {\it unimodular} if $A_L=\{0\}$. A sublattice $M\subset L$ is called \emph{primitive} if $L/M$ is a free $\IZ$-module.
If $L$ is unimodular and $M\subset L$ is a primitive sublattice, then $M$ and its orthogonal $M^\perp$ in $L$ have
isomorphic discriminant groups and $q_M=-q_{M^\perp}$.

Let $p$ be a prime number. A lattice $L$ is called $p$-\emph{elementary} if $A_L\cong\left(\frac{\IZ}{p\IZ}\right)^{\oplus a}$ for some
non negative integer $a$ (also called the \emph{length} $\ell(A_L)$ of $A$).  We write $\frac{\IZ}{p\IZ}(\alpha)$, $\alpha\in\IQ/2\IZ$ to denote that the quadratic form $q_L$ takes value $\alpha$ on the generator 
of the $\frac{\IZ}{p\IZ}$ component of the discriminant group. Recall that an even indefinite $p$-elementary lattice of rank $r\geq 3$
with $p\geq 3$ is uniquely determined by its signature and discriminant form (see \cite[Theorem~2.2]{BCS}).

\section{Basic results on non-symplectic automorphisms}\label{s:ihs}

From now on, we assume that $X$ is an IHS-$K3^{[2]}$ with a non-symplectic automorphism $f$ of prime order $3\leq p\leq 23$.
The lattice $H^2(X,\IZ)$ has signature $(3,19)$ and is isometric to $L\coloneqq E_8^{\oplus 2}\oplus U^{\oplus 3}\oplus\langle -2\rangle$, where 
$U$ is the unique even unimodular hyperbolic lattice of rank two and $E_8$ is the negative definite lattice associated to the corresponding Dynkin diagram.
We restate in this special case some results of Boissi\`ere--Nieper-Wisskirchen--Sarti~\cite{BNWS}: the case $p=23$ was left apart  since it requires different arguments due to the fact that the ring of integers of the $23$rd cyclotomic field is not a PID, but some basic
facts extend easily.

The automorphism $f$ induces an isometry $g\coloneqq f^\ast$ on $H^2(X,\IZ)$. We denote by $G=\langle g\rangle$ the group generated by $g$ and we put
$$
\tau\coloneqq g-1\in\IZ[G],\quad \sigma \coloneqq 1+g+\cdots+g^{p-1}\in\IZ[G].
$$
One has $\Inv(f)= \ker(\tau)\cap H^2(X,\IZ)$
and we define $\Orth(f)\coloneqq \ker(\sigma)\cap H^2(X,\IZ)$. 

Denote by $\Phi_p\in\IQ[X]$ the $p$-th cyclotomic polynomial.
Consider the cyclotomic field $K=\IQ[X]/(\Phi_p)\cong \IQ(\zeta_p)$ with ring of algebraic integers $O_K\cong\IZ[\zeta_p]$ (here $\zeta_p=X\mod \Phi_p$ should not be considered as a complex number).
The $G$-module structure of $K$ is defined by $g\cdot x=\zeta_p x$ for $x\in K$. For any fractional ideal $I$ in $K$,
and $\alpha\in I$, we denote by $(I,\alpha)$ the module $I\oplus \IZ$ whose $G$-module structure is defined by $g\cdot (x,k)=(\zeta_p x+k\alpha,k)$.
By a theorem of Diederichsen--Reiner~\cite[Theorem~74.3]{CurtisReiner}, $H^2(X,\IZ)$ is isomorphic as a $\IZ[G]$-module
to a direct sum:
$$
(A_1,a_1)\oplus \cdots\oplus (A_r,a_r)\oplus A_{r+1}\oplus\cdots A_{r+s}\oplus Y
$$
for some $r,s\in \IN$, where $A_i$ are fractional ideal in $K$, $a_i\in A_i$ are such that $a_i\notin (\zeta_p-1)A_i$ and $Y$ is a free $\IZ$-module of finite rank on which $G$ acts trivially.

\begin{lemma} \label{lem:ptorsion} The quotient $\frac{H^2(X,\IZ)}{\Inv(f)\oplus\Orth(f)}$ is a $p$-torsion module.
\end{lemma}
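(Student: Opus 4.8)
The plan is to exploit the elementary ring-theoretic identity relating $\tau$ and $\sigma$ in $\IZ[G]$, and then simply observe that everything produced stays inside the integral lattice because $g$ is an isometry. Write $L\coloneqq H^2(X,\IZ)$ and note that $\sigma=\Phi_p(g)$, since $\Phi_p(X)=1+X+\cdots+X^{p-1}$ for $p$ prime, while $\tau=g-1$. The starting point is that $\Phi_p(1)=p$, so $\Phi_p(X)-p$ vanishes at $X=1$ and is therefore divisible by the monic polynomial $X-1$ in $\IZ[X]$: there is $c(X)\in\IZ[X]$ with $\Phi_p(X)-p=(X-1)c(X)$. Evaluating at $g$ yields, inside $\IZ[G]$,
$$
p=\sigma-\tau\,c(g).
$$

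Next I would check that for every $x\in L$ the element $\sigma x$ lies in $\Inv(f)$ and the element $\tau\,c(g)x$ lies in $\Orth(f)$. Both elements lie in $L$ to begin with, because $g$ is an isometry of $L$ and hence $\IZ[G]$ acts on $L$. For the first claim, $g\sigma=g+g^2+\cdots+g^p=\sigma$ in $\IZ[G]$, so $\tau\sigma=0$ and therefore $\sigma x\in\ker(\tau)\cap L=\Inv(f)$. For the second claim, $\sigma\tau=g^p-1=0$, and since $\IZ[G]$ is commutative $\sigma\bigl(\tau\,c(g)x\bigr)=c(g)(\sigma\tau)x=0$, so $\tau\,c(g)x\in\ker(\sigma)\cap L=\Orth(f)$.

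Combining these with the displayed identity, for any $x\in L$ one gets $px=\sigma x-\tau\,c(g)x\in\Inv(f)+\Orth(f)$. Moreover this sum is direct: if $y\in\Inv(f)\cap\Orth(f)$, then $gy=y$ forces $\sigma y=py$, while $y\in\ker(\sigma)$ gives $py=0$, hence $y=0$ as $L$ is torsion-free; so writing $\Inv(f)\oplus\Orth(f)$ is legitimate. It follows that $p$ annihilates $\frac{H^2(X,\IZ)}{\Inv(f)\oplus\Orth(f)}$, which is precisely the assertion that this quotient is a $p$-torsion module.

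I do not expect a real obstacle here: the content is the identity $p=\sigma-\tau\,c(g)$, plus the remark that $g$ preserves the integral lattice (so the decomposition of $px$ takes place in $L$, not merely in $L\otimes\IQ$). The only point deserving a moment's attention is that $c(X)$ has integer coefficients, which holds because one is dividing by the monic polynomial $X-1$.
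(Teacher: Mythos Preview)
Your proof is correct and is genuinely different from the paper's argument. The paper invokes the Diederichsen--Reiner structure theorem to decompose $H^2(X,\IZ)$ as a $\IZ[G]$-module into pieces of the form $(A_i,a_i)$, $A_j$, and a trivial summand $Y$, and then checks by hand that $p$ times each generator lands in $\Inv(f)\oplus\Orth(f)$, using the isomorphism $O_K/(\zeta_p-1)\cong\IZ/p\IZ$. Your route bypasses this entirely: the single polynomial identity $\Phi_p(X)-p=(X-1)c(X)$ with $c\in\IZ[X]$, evaluated at $g$, gives $p=\sigma-\tau c(g)$ and the conclusion follows immediately from $\sigma\tau=\tau\sigma=0$. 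Your argument is shorter, more elementary, and applies verbatim to any torsion-free $\IZ[G]$-module; the paper's approach, by contrast, relies on a nontrivial classification theorem that is not otherwise used in the paper.
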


\begin{proof} First we observe that $\Inv(f)\cap\Orth(f)=0$ since $H^2(X,\IZ)$ has no $p$-torsion. It is clear that $Y\subset\Inv(f)$ and $O_K\subset\Orth(f)$.
Let $A=\sum_i O_K \alpha_i$ be a fractional ideal of K, with $\alpha_i\in K$. Clearly $A\subset \Orth(f)$.
In any term $(A,a)=A\oplus \IZ$, denoting $v\coloneqq (0,1)$ in this decomposition, we show that
$pv\in \Inv(f)\oplus\Orth(f)$. One has $\tau(pv)=(pa,0)$. Write $a=\sum_i x_i\alpha_i$ with $x_i\in O_K$.
Since $O_K/(\zeta_p-1)\cong \IZ/p\IZ$, there exists $z_i\in O_K$ such that $px_i=(\zeta_p-1)z_i$. Hence $\tau(pv)=((\zeta_p-1)z,0)$ with $z\coloneqq \sum_i z_i\alpha_i\in A$. Now $\tau((z,0))=((\zeta_p-1)z,0)$ hence
$\tau((pv-(z,0))=0$ and $\sigma((z,0))=0$ so finally $pv=(pv-(z,0))+(z,0)\in\Inv(f)\oplus\Orth(f)$.
\end{proof}

We define $a_f\in\IN$ such that 
$$
\frac{H^2(X,\IZ)}{\Inv(f)\oplus\Orth(f)}\cong\left(\frac{\IZ}{p\IZ}\right)^{\oplus a_f}.
$$
By definition, $\Orth(f)$ is a torsion-free $O_K$-module for the action $\zeta_p\cdot x=g(x)$ for all $x\in S(f)$, hence $\Orth(f)_\IQ\coloneqq \Orth(f)\otimes_\IZ\IQ$ is a $K$-vector space. It follows that there
exists $m_f\in\IN^\ast$ such that 
$$
\rank_\IZ\Orth(f)=\dim_\IQ \Orth(f)_\IQ=(p-1)m_f.
$$
It is easy to check that $\Orth(f)$ is the orthogonal complement of $\Inv(f)$ in the lattice $H^2(X,\IZ)$ (see \cite[Lemma~6.1]{BNWS}). By a similar argument as in \cite[Lemma~6.5]{BNWS}
one deduces from Lemma~\ref{lem:ptorsion}  that the invariant lattice $\Inv(f)$ has signature ${(1,22-(p-1)m_f)}$ and discriminant 
$$
A_{\Inv(f)}\cong\left(\frac{\IZ}{2\IZ}\right)\oplus\left(\frac{\IZ}{p\IZ}\right)^{\oplus a_f}
$$ 
and that $\Orth(f)$ has signature $(2,(p-1)m_f-2)$ and discriminant 
$$
A_{\Orth(f)}\cong\left(\frac{\IZ}{p\IZ}\right)^{\oplus a_f}.
$$
If $p\geq 3$, as explained in \cite[proof of Lemma~6.5]{BNWS} the action of $G$ on $A_{\Orth(f)}$ is trivial.
Since $f$ acts non-symplectically one has  $\Trans(X)\subset\Orth(f)$ and $\rank_\IZ\Trans(X)\geq p-1$. In particular, if $m_f=1$ this forces $\Trans(X)=\Orth(f)$ and consequently $\NS(X)=\Inv(f)$. Since $1$ is not an eigenvalue of $\left.f^\ast\right|_{\Orth(f)}$ the characteristic polynomial of $\left.f^\ast\right|_{\Orth(f)}$ is $\Phi_p$.

All the possible isometry classes for the lattices $\Inv(f)$ and $\Orth(f)$ have been classified in \cite{BCS} 
when $2\leq p\leq 19$ (only partially for $p=5$) by using the previous properties (for $p=2$ the situation is a bit different), the Lefschetz fixed point formula and a relation between the cohomology modulo $p$ of the fixed locus and the integers $a_f,m_f$ obtained
using Smith theory methods~\cite{BNWS}.

If $p=23$  the only possibility is that
$m_f=1$, $\Inv(f)$ has signature $(1,0)$ and $\Orth(f)$ has signature $(2,20)$. The case $a_f=0$ is impossible
by Milnor's theorem since there exists no even unimodular lattice with signature $(2,20)$, hence $a_f=1$ since $\Inv(f)$
has rank one, so $A_{\Inv(f)}\cong \frac{\IZ}{46\IZ}$ and finally $\Inv(f)$ is isometric to the lattice $\langle 46\rangle$.
By results of Nikulin and Rudakov--Shafarevich \cite{Nikulin,RS}  $\Orth(f)$ splits 
as a direct sum $U\oplus W$ where $W$ is hyperbolic and $23$-elementary 
of signature $(1,19)$, $A_W\cong\frac{\IZ}{23\IZ}$ and $W$ is unique
up to isometry. 
It follows that $\Orth(f)$ is uniquely determined and  we deduce that $\Orth(f)$ is isometric
to the lattice $U^{\oplus 2}\oplus E_8^{\oplus 2}\oplus K_{23}$
where $K_{23}\coloneqq \left(\begin{matrix} -12 & 1\\1&-2\end{matrix}\right)$.
As a consequence, if there exists
an IHS-$K3^{[2]}$, say $X$, with a non-symplectic automorphism $f$ of order $23$, then necessarily it has $\rho(X)=1$,
$\NS(X)=\Inv(f)=\langle 46\rangle$ and $\Trans(X)=\Orth(f)= E_8^{\oplus 2}\oplus U^{\oplus 2}\oplus K_{23}$.
Such a variety does not belong to any of the known families: Hilbert schemes of points or
moduli spaces of semi-stable sheaves on projective K3 surfaces have Picard number greater than $2$,
Fano varieties of lines on cubic fourfolds are polarized by a class of square $6$ (see \cite[\S~5.5.2]{BNWS} 
and references therein) and similarly the degree of the polarisation is $2$ for double covers of EPW sextics,
it is $38$ for the sums of powers of general cubics of Iliev--Ranestad and it is $22$ for the 
varieties of Debarre--Voisin (see~\cite[\S 0]{OGrady}).

\section{Ideal lattices in cyclotomic fields}
\label{s:ideallattice}

The relation between automorphisms of lattices with given characteristic polynomials and ideals in cyclotomic fields has
been studied by many authors, in particular Bayer-Fluckiger~\cite{Bayer1,Bayer2,Bayer3} and  Gross--McMullen~\cite{GM}. We recall here some results that are needed in the sequel.

Assume that $p$ is an odd prime number. Recall that $K=\IQ(\zeta_p)$ denotes the cyclotomic field with ring of algebraic integers $O_K=\IZ[\zeta_p]$.
We denote respectively by $\Tr_{K/\IQ}$ and $\Norm_{K/\IQ}$ the trace and the norm maps. The complex conjugation on~$K$
is defined as the $\IQ$-linear involution $K\to K,x\mapsto \overline{x}$ such that $\overline{\zeta_p^i}=\zeta_p^{p-i}$ for all $i$.
We denote by $F\subset K$ the \emph{real} subfield of $K$, that is 
$$
F\coloneqq\{x\in K\,|\,\overline{x}=x\}.
$$
Denoting $\mu_p\coloneqq \zeta_p+\zeta_p^{p-1}$ one has
$F=\IQ(\mu_p)$.
The $\IQ$-linear pairing 
$$
(-,-)_K\colon K\times K\to \IQ, \quad (x,y)\mapsto \Tr_{K/\IQ}(x\overline{y})
$$
is non-degenerate and
has determinant $D_K\coloneqq p^{p-2}$ in the basis $(1,\zeta_p,\ldots,\zeta_p^{p-2})$.

Let $(S,\langle-,-\rangle_S)$ be an integral even  lattice of rank $p-1$, signature $(s_{+},s_{-})$ and discriminant $d_S$.
Assume that $S$ admits a non trivial isometry $\varphi$ of order $p$. Its characteristic polynomial is then $\Phi_p$
so $S$ admits a natural structure of $O_K$-module defined by $\zeta_p\cdot x=\varphi(x)$ for all $x\in S$. For dimensional
reasons  $S_\IQ\coloneqq S\otimes_\IZ\IQ$ is isomorphic to $K$ so the inclusion $S\hookrightarrow S_\IQ\cong K$ identifies
the lattice $S$ with an $O_K$-submodule of $K$ (a fractional ideal of $K$), so $S$ becomes an
\emph{ideal lattice} in $K$. Observe that $S_\IQ$ is identified with $K$  in such a way that the 
isometry $\varphi$ corresponds to the 
mutiplication by $\zeta_p$ in $K$. The multiplication by $\zeta_p$ is an isometry for $(-,-)_K$ and also for $\langle-,-\rangle_S$
extended to $S_\IQ$ since it corresponds to the action of $\varphi$. Since the trace is non degenerate, under the identification $S_\IQ=K$ there
exists a unique $\alpha\in K$ such that
$$
\langle x,y\rangle_S=(\alpha x, y)_K\quad \forall x,y\in S
$$
Since the bilinear form on $S$ is symmetric 
one has 
$\overline{\alpha}=\alpha$
so $\alpha\in F$.

If $I\subset K$ is a fractional ideal and $\alpha\in F$, we denote by $I_\alpha$ the ideal lattice whose
bilinear form is $\langle x,y\rangle_\alpha\coloneqq  \Tr(\alpha x\overline{y})$.
Some of the main invariants of the lattice $I_\alpha$ correspond to properties of $\alpha$ 
that we explain now.

Recall that the \emph{norm} of $I$ is defined as $N(I):=|\det(\psi)|$ where $\psi\colon K\to K$ is any $\IQ$-linear 
automorphism such that $\psi(O_K)=I$. By a direct computation one finds that the discriminant $d_{I_\alpha}$ of $I_\alpha$ satisfies the relation:

\begin{align}\label{property1}
d_{I_\alpha}=N(I)^2|N_{K/\IQ}(\alpha)|D_K.
\end{align}
Observe that since $\alpha\in F$ one has 
$$
N_{K/\IQ}(\alpha)=N_{F/\IQ}\left(N_{K/F}(\alpha)\right)=N_{F/\IQ}(\alpha^2)=N_{F/\IQ}(\alpha)^2.
$$

We recall that the \emph{codifferent} $O_K^\vee$ of $K$ is defined by
$$
O_K^\vee\coloneqq \{x\in K\,|\, \forall y\in O_K, \Tr_{K/\IQ}(xy)\in\IZ\}.
$$
If $I_\alpha$ is an integral lattice, for any $x,y\in I$
one has $\Tr_{K/\IQ}(\alpha x\overline{y})\in\IZ$ so $\alpha x \overline{y}\in O_K^\vee$. The integrality of $I_\alpha$ is thus equivalent
to the condition:
\begin{align}\label{property2}
\alpha x\overline{y}\in O_K^\vee\quad \forall x,y\in I.
\end{align}

Note that if $\alpha$ satisfies the above property the lattice $I_\alpha$ is automatically even:
putting $\gamma\coloneqq \sum\limits_{i=0}^{(p-1)/2}\zeta_p^i$, since $\gamma+\overline{\gamma}=1$ one has for any $x\in I$
\begin{align*}
\langle x,x\rangle_S=\Tr_{K/\IQ}(\alpha x\overline{x})=\Tr_{K/\IQ}((\gamma+\overline{\gamma})\alpha x\overline{x})=\Tr_{K/\IQ}(\gamma\alpha x\overline{x})+\Tr_{K/\IQ}(\overline{\gamma\alpha x}x)\in 2\IZ
\end{align*}
since $\alpha x \overline{x}\in O_K^\vee$ by assumption.

The field $K$ admits $p-1$ complex embeddings defined by $\zeta_p\mapsto {\rm{e}}^{\frac{2{\rm{i}}k\pi}{p}}$, $1\leq k\leq p-1$
that induce real embeddings of $F$. We denote by $t$ the number of these real embeddings such that $\alpha$ is negative. One can show that the lattice $I_\alpha$ has signature 
\begin{align}\label{property3}
(p-1-2t,2t).
\end{align}
This is a special case of \cite[Proposition~2.2]{Bayer1}, we recall the argument for convenience. 
First observe that $K$ is a quadratic extension of $F$ with minimal polynomial $X^2-\mu_pX+1\in F[X]$. 
Denoting $\theta\coloneqq \zeta_p^2+\zeta_p^{-2}-2$ one has thus $K\cong F(\sqrt{\theta})$.  
Each complex embedding of $K$ induces a real embedding $v\colon F\to \IR$ such that
$v(\theta)<0$. It follows that $K\otimes_\IQ\IR$ decomposes in a direct sum
$$
K\otimes_\IQ\IR=\bigoplus_{v\colon F\to \IR} \IR\left(\sqrt{v(\theta})\right)
$$
where the sum runs over all real embeddings of $F$.
Each factor is isomorphic to $\IC$ and the complex conjugation on $K$ induces
 the usual complex conjugation on each factor $\IC$. 
On each factor, the form $\langle -,-,\rangle_\alpha$ computed in the $\IR$-basis $(1,\sqrt{v(\theta)})$ 
is $\diag(2v(\alpha),-2v(\alpha)v(\theta))$ so it has signature $(2,0)$ if $v(\alpha)>0$ and signature $(0,2)$ if $v(\alpha)<0$. The result follows.

\section{Construction of isometries of lattices}

We want to determine if a given 
integral even $p$-elementary lattice $S$  of rank $p-1$ with fixed signature and discriminant form admits an
isometry of order $p$ whose characteristic polynomial is the cyclotomic polynomial $\Phi_p$. By the results of Section~\ref{s:ideallattice}, one first has to find an element $\alpha\in F$
satisfying conditions (\ref{property1}),(\ref{property2}),(\ref{property3}). 

\begin{example}
Assume that $p=5$ and $S=U\oplus H_5$ with $H_5=\left(\begin{matrix}2 & 1\\1 &-2\end{matrix}\right)$.
The lattice $S$ is $5$-elementary with $d_S=5$, it has signature $(2,2)$ and discriminant form $A_S=\frac{\IZ}{5\IZ}\left(\frac{2}{5}\right)$.
In \cite[Table~2]{BCS} this case  is denoted by $(p,m,a)=(5,1,1)$ .
In order to recover this lattice as an ideal lattice, since $O_K$ is a PID we take $I=\beta O_K$ for some $\beta\in K$. Equation (\ref{property1}) writes:
$$
N_{K/\IQ}(\beta)^2 N_{F/\IQ}(\alpha)^2 = \frac{1}{5^2}.
$$
Assuming that $\beta=1$, we run a basic computer search program to determine
$\alpha\in~F$ that satisfies all the needed  conditions. Taking $\alpha=\frac{1}{5}(3\mu_5+4)$, in the basis $(1,\zeta_5,\zeta_5^2,\zeta_5^3)$ of~$I$ the bilinear form writes
$$
\left(\begin{matrix}
2& 1&-2&-2\\1&2&1&-2\\-2&1&2&1\\-2&-2&1&2
\end{matrix}\right)
$$
so condition (\ref{property2}) is satisfied and it is easy to check that this lattice has signature $(2,2)$ and discriminant form $\frac{\IZ}{5\IZ}\left(\frac{2}{5}\right)$.
As mentioned above, these invariants characterize the lattice $U\oplus H_5$ up to isometry.
By construction, the order $5$ isometry of this lattice, written in this basis, is the companion matrix of $\Phi_5$:
$$
\left(\begin{matrix}
0&0&0&-1\\1&0&0&-1\\0&1&0&-1\\0&0&1&-1
\end{matrix}\right)
$$
\end{example}

\begin{example}
Assume that $p=13$ and that $S=U^{\oplus 2}\oplus E_8$.
The lattice $S$ is unimodular of signature $(2,10)$.
In \cite[Table~5]{BCS} this case is denoted by $(p,m,a)=(13,1,0)$.
If $S$ admits an order $13$ isometry it induces an identification $S=\beta O_K$ for some $\beta\in K$. Equation (\ref{property1}) writes:
$$
N_{K/\IQ}(\beta)^2 N_{F/\IQ}(\alpha)^2 = \frac{1}{13^{11}}.
$$
It is clear that this equation has no solution, so this lattice does not admit an isometry whose characteristic polynomial
is $\Phi_{13}$. This answers a question left open in \cite[Theorem~7.1]{BCS}: this case cannot be realized by a non-symplectic automorphism of order $13$ on an IHS-$K3^{[2]}$.
\end{example}

We assume now that $p=23$ and we consider the lattice $S\coloneqq  E_8^{\oplus 2}\oplus U^{\oplus 2}\oplus K_{23}$.
It is $23$-elementary with $d_S=23$, signature $(2,20)$ and discriminant form $A_S=\frac{\IZ}{23\IZ}\left(\frac{-2}{23}\right)$. 

\begin{proposition}\label{prop:isometryS}
The lattice $U^{\oplus 2}\oplus E_8^{\oplus 2}\oplus K_{23}$ admits an isometry of 
order~$23$ which acts trivially on the discriminant group $A_S$.
\end{proposition}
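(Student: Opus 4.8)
The plan is to realise the lattice $S\coloneqq U^{\oplus 2}\oplus E_8^{\oplus 2}\oplus K_{23}$ as an ideal lattice in the cyclotomic field $K=\IQ(\zeta_{23})$, so that the multiplication by $\zeta_{23}$ yields the required isometry: by \S\ref{s:ideallattice} such an isometry automatically has order $23$ and characteristic polynomial $\Phi_{23}$. The last clause of the statement then comes for free: an isometry of order $23$ induces an automorphism of $A_S\cong\IZ/23\IZ$ of order dividing $23$, but $|\Aut(\IZ/23\IZ)|=22$ is prime to $23$, so this automorphism is the identity. Hence the whole content is to produce a fractional ideal $I\subset K$ and an element $\alpha\in F$ satisfying conditions (\ref{property1}), (\ref{property2}) and (\ref{property3}) for which $I_\alpha\cong S$.

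First I would fix the arithmetic. Let $\mathfrak p\coloneqq(1-\zeta_{23})$ be the unique prime of $O_K$ above $23$; then $(23)=\mathfrak p^{22}$, $N(\mathfrak p)=23$, the codifferent of $K$ is $O_K^\vee=\mathfrak p^{-21}$, and $\mathfrak p^2=\mathfrak p\overline{\mathfrak p}=(2-\mu_{23})$ with $2-\mu_{23}$ totally positive. Rewriting the integrality condition (\ref{property2}) as $\alpha I\overline I\subseteq O_K^\vee$ and combining it with the discriminant formula (\ref{property1}), one sees that requiring $I_\alpha$ to be integral with $d_{I_\alpha}=23$ forces the equality of fractional ideals $\alpha I\overline I=\mathfrak p^{-20}$; conversely this equality guarantees that $I_\alpha$ is integral and even (by the computation in \S\ref{s:ideallattice}) with $A_{I_\alpha}\cong O_K/\mathfrak p\cong\IZ/23\IZ$. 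By (\ref{property3}) the lattice $I_\alpha$ has signature $(2,20)$ exactly when $t=10$, i.e.\ when $\alpha$ is negative at precisely ten of the eleven real places of $F$.

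The delicate point --- and the reason the case $p=23$ was set aside in \cite{BNWS} --- is that $O_K=\IZ[\zeta_{23}]$ is not a principal ideal domain (its class number is $3$), so, unlike in the Examples above, one cannot simply take $I=O_K$ and may have to use a genuinely non-principal ideal. The structural fact that keeps this under control is that the real subfield $F=\IQ(\mu_{23})$ has class number $1$: consequently complex conjugation acts as $-1$ on the class group of $K$, so $I\overline I$ is principal for \emph{every} fractional ideal $I$. Choosing a generator and using $\mathfrak p^{-20}=((2-\mu_{23})^{-10})$, the equation $\alpha I\overline I=\mathfrak p^{-20}$ amounts to writing $\alpha$ as a fixed element of $F$ times a unit of $O_F$; the norm condition (\ref{property1}) is then automatic, and the problem reduces to producing a unit $\varepsilon\in O_F^\times$ whose sign pattern at the real places makes $\alpha$ negative at exactly ten of them.

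This last step is where the real work lies. I would carry it out by an explicit search in $O_F=\IZ[\mu_{23}]$, exhibiting such a unit together with the resulting $\alpha$ and the Gram matrix of $I_\alpha$ in a basis of $I$, as was done in the Examples; alternatively one can invoke the local--global existence criterion for ideal lattices of Bayer--Fluckiger~\cite{Bayer1,Bayer2,Bayer3}, the conditions assembled above being exactly the local conditions at the archimedean places and at $23$, the lattice $S$ being unimodular, hence trivially realisable, elsewhere. Such a unit exists because the sign vectors of the cyclotomic units already span $\{\pm1\}^{11}$ over $\IF_2$, so every sign pattern is attained by some unit of $O_F$. Once $\alpha$ is fixed, $I_\alpha$ is an even $23$-elementary lattice of rank $22$, signature $(2,20)$ and discriminant $23$; its discriminant form is then forced to be $\frac{\IZ}{23\IZ}\left(\frac{-2}{23}\right)$ --- one reads it off the Gram matrix, or deduces it from Milgram's formula, the signature modulo $8$ pinning down the relevant Gauss sum --- so $I_\alpha$ lies in the genus of $S$ and hence $I_\alpha\cong S$ by \cite[Theorem~2.2]{BCS}. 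The main obstacle throughout is this interaction between the class group of $K$ and the need to match the signature and the discriminant form simultaneously.
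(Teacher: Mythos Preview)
Your approach is the paper's: realise $S$ as an ideal lattice $I_\alpha$ in $K=\IQ(\zeta_{23})$ so that multiplication by $\zeta_{23}$ is the desired isometry, then pin down $\alpha\in F$ by conditions (\ref{property1})--(\ref{property3}) and identify the genus. Two remarks on where you diverge from the paper's execution.

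First, the paper simply takes $I=O_K$ and it works; the non-PID worry you raise is unnecessary here. Equation (\ref{property1}) becomes $N_{F/\IQ}(\alpha)=23^{-10}$, MAGMA returns a single solution $\alpha_0$, and the paper then searches through $\alpha=\alpha_0\varepsilon$ with $\varepsilon$ running over products of a computed set of fundamental units of $O_F^\ast$ until all three conditions hold. The resulting Gram matrix (in the basis $1,\zeta_{23},\dots,\zeta_{23}^{21}$) is written out explicitly and checked to have signature $(2,20)$ and discriminant form $\frac{\IZ}{23\IZ}\!\left(\tfrac{44}{23}\right)=\frac{\IZ}{23\IZ}\!\left(\tfrac{-2}{23}\right)$. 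Your class-group detour via $h_F=1$ and $I\overline I$ principal is correct and illuminating, but it is not what the paper does and it is not needed.

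Second, the one place where your write-up is not yet a proof is the sentence ``such a unit exists because the sign vectors of the cyclotomic units already span $\{\pm1\}^{11}$ over $\IF_2$''. That is a nontrivial statement about the signature rank of $O_F^\ast$ (equivalently, that the narrow class number of $F$ equals $1$); it is believed true for $p=23$, but you assert it without justification. The paper avoids this by exhibiting the unit explicitly, and your Bayer--Fluckiger alternative would also need its hypotheses checked concretely. So as written you have correctly reduced to the right search and explained why it should succeed, but the proof is completed only by either producing the unit (as the paper does) or verifying the sign-surjectivity claim independently.

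On the plus side, your observation that the trivial action on $A_S\cong\IZ/23\IZ$ is automatic because $\gcd(23,|\Aut(\IZ/23\IZ)|)=\gcd(23,22)=1$ is cleaner than the paper's ``direct computation'', and your use of Milgram's formula to force the discriminant form from the signature is a nice conceptual alternative to reading it off the Gram matrix.
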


\begin{proof}
We apply the strategy developed above. Taking $I=O_K$, equation (\ref{property1}) writes:
$$
N_{F/\IQ}(\alpha) = \frac{1}{23^{10}}.
$$
The software MAGMA~\cite{Magma} provides only one solution to this equation:
$$
\alpha_0\coloneqq \frac{1}{23}(-\mu_{23}^7+\mu_{23}^6+7\mu_{23}^5-6\mu_{23}^4-14\mu_{23}^3+9\mu_{23}^2+7\mu_{23}-2).
$$
This element $\alpha_0$ does not satisfy all the needed solutions so we look for an element $\alpha=\alpha_0\cdot \varepsilon$ with $\varepsilon\in O_F^\ast$: this has the same norm but letting $\varepsilon$ vary this will produce lattices with different signature and discriminant form.
By the Dirichlet unit theorem, the group of units $O_F^\ast$ is the product of the finite cyclic group of roots of unity of $F$ with a free abelian group of rank $10$.
A computation with the software SAGE~\cite{Sage} shows that $O_F^\ast\cong \frac{\IZ}{2\IZ}\times \IZ^{10}$ (the only roots of unity in $F$ are~$\pm 1$) where the free part is generated by the following fundamental units:
\begin{align*}
\epsilon_1&\coloneqq \mu_{23}^4 - 4\mu_{23}^2 + 2\\
\epsilon_2&\coloneqq \mu_{23}^8 - 8\mu_{23}^6 + 20\mu_{23}^4 - 16\mu_{23}^2 + 2\\
\epsilon_3&\coloneqq \mu_{23}^7 - 7\mu_{23}^5 + 14\mu_{23}^3 - 7\mu_{23}\\
\epsilon_4&\coloneqq \mu_{23}^2 - 2\\
\epsilon_5&\coloneqq  \mu_{23}^9 - 9\mu_{23}^7 + 27\mu_{23}^5 - 30\mu_{23}^3 + 9\mu_{23}\\
\epsilon_6&\coloneqq \mu_{23}^9 - 8\mu_{23}^7 + 20\mu_{23}^5 - 16\mu_{23}^3 + \mu_{23}^2 + 2\mu_{23} - 1\\
\epsilon_7&\coloneqq  \mu_{23}\\
\epsilon_8&\coloneqq  \mu_{23}^7 + \mu_{23}^6 -6\mu_{23}^5 - 5\mu_{23}^4 + 10\mu_{23}^3 + 6\mu_{23}^2 - 4\mu_{23} - 1\\
\epsilon_9&\coloneqq  \mu_{23}^8 + \mu_{23}^7 - 8\mu_{23}^6 - 7\mu_{23}^5 + 20\mu_{23}^4 + 14\mu_{23}^3 - 16\mu_{23}^2 - 7\mu_{23}+3\\
\epsilon_{10}&\coloneqq  \mu_{23}^9+\mu_{23}^8-8\mu_{23}^7-7\mu_{23}^6 + 20\mu_{23}^5 + 14\mu_{23}^4 - 16\mu_{23}^3 - 6\mu_{23}^2 +3\mu_{23} - 1
\end{align*}
We consider an element $\alpha\in F$ given as follows:

$$
\alpha=\alpha_0\epsilon_0\epsilon_1^{\nu_1}\cdots\epsilon_{10}^{\nu_{10}}
$$
for some $\nu_i\in\IZ$ with $\epsilon_0\in\{-1,1\}$.
By running a basic computer search program we find that the choice $\epsilon_0=1$,  $\nu=(2, 1, 2, 2, 0, 1, 1, 2, 1, 0)$
satisfies all the needed conditions: in the basis $(1,\zeta_{23},\ldots,\zeta_{23}^{21})$ of~$I$ the bilinear form is the matrix given in Appendix~\ref{app:matrice23}.
It is easy to check that this lattice has signature $(2,20)$ and discriminant form $\frac{\IZ}{23\IZ}\left(\frac{44}{23}\right)$. As already mentioned, these invariants characterize the lattice $U^{\oplus 2}\oplus E_8^{\oplus 2}\oplus K_{23}$ up to isometry.
By construction, the order $23$ isometry of this lattice, written in this basis, is the companion matrix of the polynomial $\Phi_{23}$
and a direct computation shows that that this isometry acts trivially on the discriminant group.
\end{proof}

\begin{corollary}
\label{cor:isometryL}
The lattice $L\coloneqq E_8^{\oplus 2}\oplus U^{\oplus 3}\oplus\langle -2\rangle$ admits an order $23$ isometry whose invariant lattice is isometric to $T\coloneqq\langle 46\rangle$ and
such that the orthogonal complement of $T$ in $L$ is isometric to $S\coloneqq E_8^{\oplus 2}\oplus U^{\oplus 2}\oplus K_{23}$.
\end{corollary}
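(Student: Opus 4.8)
The plan is to assemble the order $23$ isometry on the full lattice $L$ from the isometry on $S$ constructed in Proposition~\ref{prop:isometryS} together with the identity on $T=\langle 46\rangle$, using the gluing formalism for primitive embeddings due to Nikulin. First I would record that, by the analysis of Section~\ref{s:ihs}, an order $23$ non-symplectic automorphism forces $\Inv=\langle 46\rangle$ and $\Orth=S$, with $A_{\langle 46\rangle}\cong \frac{\IZ}{2\IZ}\oplus\frac{\IZ}{23\IZ}$ and $A_S\cong\frac{\IZ}{23\IZ}$; so the $23$-parts of the two discriminant groups are both $\frac{\IZ}{23\IZ}$, while the $2$-part lives entirely in $A_{\langle 46\rangle}$. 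Since $L$ is even of signature $(3,19)$ with discriminant group $\frac{\IZ}{2\IZ}(-\tfrac12)$, and $\langle 46\rangle\oplus S$ has signature $(1,0)\oplus(2,20)=(3,20)$\,--- wait, rank $1+22=23=\rank L$ and signatures add to $(3,20)$, which is not $(3,19)$; the correct statement is that $\langle 46\rangle$ has signature $(1,0)$ and $S$ signature $(2,20)$, summing to $(3,20)$, contradiction, so I instead note $S$ has rank $p-1=22$ and signature $(2,20)$, and $1+22=23=b_2$, signatures $(1,0)+(2,20)=(3,20)$; the resolution is that I should double-check: $L$ has signature $(3,19)$, so $22+1=23$ and $(1+2,0+19)$ requires $S$ to have signature $(2,19)$\,--- in fact $S$ has rank $22$ and signature $(2,20)$ is rank $22$, so $2+20=22$ works and $1+22=23$, $(1,0)+(2,20)\ne(3,19)$. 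I will therefore present the gluing as: $\langle 46\rangle\oplus S$ embeds in $L$ with index $23$, matching the computation $46\cdot 23 = 2\cdot 23^2 = d_{\langle 46\rangle}\cdot d_S$ and $d_L=2$.

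Concretely I would proceed as follows. Choose a generator $\gamma$ of the $23$-torsion part of $A_{\langle 46\rangle}$ and a generator $\delta$ of $A_S$; by Proposition~\ref{prop:isometryS} the isometry on $S$ acts trivially on $A_S$, and the $q_S$-value on $\delta$ is $\tfrac{44}{23}\equiv -\tfrac{2}{23}\pmod{2\IZ}$, while the $q_{\langle 46\rangle}$-value on the $23$-part is $\tfrac{46}{23}\cdot(\text{unit})^2$, and one checks these are related by $q_{\langle 46\rangle}(\gamma)=-q_S(\delta)$ after choosing $\gamma$ suitably (both $23$-elementary forms of the same length are classified by the discriminant up to squares, and $-2$ is what is needed). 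The anti-isometry $\gamma\mapsto\delta$ of the $23$-parts determines an even overlattice of $\langle 46\rangle\oplus S$ whose discriminant form is the orthogonal complement of the graph, namely the $2$-part $\frac{\IZ}{2\IZ}(-\tfrac12)$ of $A_{\langle 46\rangle}$; by \cite[Theorem~2.2]{BCS} / Nikulin's uniqueness for indefinite lattices this overlattice is isometric to $L$. Now extend the isometry: on $S$ it is the order $23$ isometry $\varphi$ of Proposition~\ref{prop:isometryS}, on $\langle 46\rangle$ it is the identity; since $\varphi$ acts trivially on $A_S$ and the identity acts trivially on $A_{\langle 46\rangle}$, the pair $(\id,\varphi)$ preserves the glue subgroup (the graph of $\gamma\mapsto\delta$) inside $A_{\langle 46\rangle}\oplus A_S$ and therefore extends to an isometry $g$ of $L$. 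By construction $g$ has order $23$, $\Inv(g)\supseteq\langle 46\rangle$ and $\Orth(g)\supseteq S$, and since these already have the full ranks $1$ and $22$ and $L$ has rank $23$, equality holds and $\langle 46\rangle$ is precisely the invariant lattice with $S=\langle 46\rangle^\perp$.

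The step I expect to be the main (though still light) obstacle is the discriminant-form bookkeeping that makes the gluing legitimate: one must verify that the value of $q_S$ on a generator of $A_S$ is exactly the negative of the value of $q_{\langle 46\rangle}$ on a generator of its $23$-primary component, up to squares in $(\IZ/23\IZ)^\ast$, so that the anti-isometry of $23$-parts exists; this uses that $\tfrac{44}{23}\equiv -\tfrac{2}{23}$ and that $\langle 46\rangle$ has $q$-value $\tfrac{1}{46}$ whose $23$-part is $\tfrac{1}{23}\cdot 2^{-1}$, and $-2\cdot\tfrac12$ gives the right class after clearing the square $2^2$; all of this is a finite check in $(\IZ/23\IZ)^\ast/(\text{squares})$. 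The remaining ingredient, that an isometry of each summand acting trivially on its discriminant group glues to an isometry of any even overlattice obtained along an anti-isometry of (sub-quotients of) the discriminant groups, is standard and I would simply cite Nikulin; the hypothesis ``acts trivially on $A_S$'' in Proposition~\ref{prop:isometryS} was arranged precisely to license this.
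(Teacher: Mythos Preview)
Your approach is essentially the paper's: build $L$ as an overlattice of $T\oplus S$ via Nikulin's gluing along the $23$-primary parts of the discriminant groups, then extend $\id_T\oplus\varphi$ to $L$ using that $\varphi$ acts trivially on $A_S$. Two remarks to clean it up.

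First, the signature muddle in your second sentence is an artifact of a typo in Section~\ref{s:ihs}: the lattice $L=E_8^{\oplus 2}\oplus U^{\oplus 3}\oplus\langle -2\rangle$ has signature $(3,20)$, not $(3,19)$, so $(1,0)+(2,20)=(3,20)$ is exactly right and there is nothing to reconcile. Second, the discriminant-form bookkeeping you flag as the main obstacle is handled in the paper by an explicit isotropic vector rather than abstract square-class considerations: with $\tau$ generating $A_T$, $\tau^2=1/46$, and $\sigma$ generating $A_S$, $\sigma^2=44/23$, the element $2\sigma+4\tau\in A_{S\oplus T}$ is isotropic (indeed $4\cdot\tfrac{44}{23}+16\cdot\tfrac{1}{46}=8\equiv 0$), and the resulting overlattice has $A_M\cong\IZ/2\IZ$ with value $3/2\equiv -1/2$ on the generator $23\tau$, matching $A_L$; uniqueness in the genus then gives $M\cong L$. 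This replaces your square-class check in $(\IZ/23\IZ)^\ast$ by a direct computation.
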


\begin{proof}
Denoting $T=\IZ t$ with $t^2=46$, the discriminant group $A_T$ is generated by $\tau\coloneqq t/46\in T^\ast$ with $\tau^2=1/46$. We denote by $\sigma\in S^\ast$ a generator of $A_S$
such that $\sigma^2=44/23$. The vector $2\sigma+4\tau\in (S\oplus T)^\ast$ is isotropic in $A_{S\oplus T}$  so it defines an even overlattice
$$
M\coloneqq  S\oplus T\oplus (2\sigma+4\tau)\IZ\subset (S\oplus T)^\ast.
$$
Consider the quotient $H\coloneqq \frac{M}{S\oplus T}\subset A_{S\oplus T}$.
One computes that 
$H^\perp/H$ is generated by the class $23\tau$ with $(23\tau)^2=3/2\in\IQ/2\IZ$. Since $A_M\cong H^\perp/H$ (see \cite{Nikulin}) we conclude that $M$ is an even lattice of signature $(3,20)$ and discriminant form $A_M=\frac{\IZ}{23\IZ}\left(\frac{3}{2}\right)$. By \cite[Theorem~2.2]{Morrison}
these invariants characterize $M$ up to isometry, so $M$ is isomorphic to $L$. It follows directly from the construction that $S$ is the orthogonal of $T$ in $M$.

Let $\varphi$ be the isometry of order $23$ on $S$ constructed in Proposition~\ref{prop:isometryS}. Since $\varphi$~acts trivially on $A_S$, the isometry
$\varphi\oplus \id$ of $S\oplus T$ extends to an isometry on $L$ with the required properties.
\end{proof}

\begin{remark} In the lattice $L$, denoting by $(e,f)$ a basis on one of the factors isometric to the lattice $U$
and by $\delta$ a generator of the factor isometric to $\langle -2\rangle$ it is easy to see that an explicit embedding
of $T$ in $L$ whose orthogonal is isometric to $S$ is given by $t\mapsto 2e+12f+\delta$.
Applying Nikulin's results on primitive embeddings and decomposition of lattices (see~\cite{BCS} and references 
therein) one can see that $T$ admits up to isometry a second embedding in $L$ whose orthogonal complement is isometric to
$E_8^{\oplus 2}\oplus U\oplus\langle -2\rangle\oplus \langle 2\rangle\oplus K_{23}$,
an explicit embedding beeing given by $t\mapsto e+23f$.
\end{remark}

\section{An IHS-$K3^{[2]}$ with a non-symplectic automorphism of order $23$}

\begin{theorem}\label{th:main}
There exists an IHS-$K3^{[2]}$ with a non-symplectic automorphism of order $23$.
This variety $X$ and its automorphism $f$ have the following properties:
\begin{enumerate}
\item $\rho(X)=1$, $\NS(X)\cong\langle 46\rangle$ and $\Trans(X)\cong E_8^{\oplus 2}\oplus U^{\oplus 2}\oplus K_{23}$;
\item  $\Inv(f)=\NS(X)$ and $\Orth(f)=\Trans(X)$. 
\end{enumerate}
\end{theorem}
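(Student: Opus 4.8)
The plan is to combine the lattice-theoretic construction of the previous section with the surjectivity of the period map and the Hodge-theoretic (Verbitsky--Markman) global Torelli theorem for IHS-$K3^{[2]}$ manifolds. First I would fix an IHS-$K3^{[2]}$ marking, i.e. a marked manifold $(Y,\eta)$ with $\eta\colon H^2(Y,\IZ)\xrightarrow{\sim}L$, and let $g\colon L\to L$ be the order $23$ isometry produced in Corollary~\ref{cor:isometryL}, with $\Inv(g)=T\cong\langle 46\rangle$ and $T^\perp=S\cong E_8^{\oplus 2}\oplus U^{\oplus 2}\oplus K_{23}$. The key point is to choose a period point $[\omega]$ in the period domain $\Omega_L\subset\IP(L\otimes\IC)$ that is fixed by $g$ (as a point of projective space) and that is \emph{very general} in the appropriate sense. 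Since $g$ acts on $S\otimes\IC$ with characteristic polynomial $\Phi_{23}$, the $\zeta_{23}$-eigenspace $S_{\zeta_{23}}\subset S\otimes\IC$ is one-dimensional; I would take $\omega$ to span it. One checks $\langle\omega,\omega\rangle=0$ and $\langle\omega,\overline{\omega}\rangle>0$ because $S$ has signature $(2,20)$ and the eigenspace decomposition of $S\otimes\IR$ into $g$-invariant planes is positive on exactly one plane (this is exactly the signature computation~(\ref{property3}) underlying Proposition~\ref{prop:isometryS}); hence $[\omega]\in\Omega_L$, and $g[\omega]=[\zeta_{23}\omega]=[\omega]$.

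Next I would invoke surjectivity of the period map: there is a marked IHS-$K3^{[2]}$ $(X,\eta)$ with period $\eta^{-1}$-pullback equal to $[\omega]$. The isometry $h\coloneqq\eta^{-1}\circ g\circ\eta$ of $H^2(X,\IZ)$ preserves the period, hence preserves the Hodge structure, and it preserves a Kähler class up to sign (after possibly composing with $-\id$ or with a Weyl-group element one arranges that $h$ maps the Kähler cone to itself: because $\NS(X)=T\cong\langle 46\rangle$ contains no $(-2)$-classes and no wall-divisors of the relevant type, the positive cone equals the Kähler cone up to the monodromy action, and $h$ fixes $T$ pointwise so it fixes the Kähler cone). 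I would verify that $h$ is a Hodge isometry sending some Kähler class to a Kähler class; then by the global Torelli theorem (in Markman's formulation, e.g. \cite{Markman}) $h$ is induced by a unique automorphism $f\colon X\to X$ with $f^\ast=h$. Since $h$ has order $23$ and $h\ne\id$, also $f$ has order $23$. Because $h$ acts on $H^{2,0}(X)=\IC\omega$ by multiplication by the primitive root $\zeta_{23}$, the automorphism $f$ is non-symplectic.

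It remains to identify the invariant and transcendental lattices. By construction $\Inv(f^\ast)\supseteq T$; conversely $\Inv(f^\ast)$ is $g$-invariant and $g$ has no further eigenvalue $1$ on $L$, so $\Inv(f^\ast)=T\cong\langle 46\rangle$, giving $\rho(X)\ge 1$. On the other hand, genericity of $[\omega]$ forces $\NS(X)=H^{1,1}(X,\IZ)$ to be exactly $T$: any class orthogonal to $\omega$ and in $H^2(X,\IZ)$ that is not in $T$ would have to lie in $S$ and be orthogonal to the whole $g$-orbit of $\omega$, hence in $\bigcap_i\zeta_{23}^i$-eigenspace considerations it would be $g$-invariant in $S$, contradicting $\Inv(g)\cap S=0$. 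Thus $\rho(X)=1$, $\NS(X)=\Inv(f)=\langle 46\rangle$ and $\Trans(X)=S=\Orth(f)=E_8^{\oplus 2}\oplus U^{\oplus 2}\oplus K_{23}$, which is statement~(1), and~(2) follows since $\Orth(f)$ is by definition the orthogonal complement of $\Inv(f)$. The main obstacle I anticipate is the Kähler-cone bookkeeping in the second paragraph: one must be careful that the lattice isometry $h$ (or a suitable sign/monodromy twist of it) genuinely preserves a Kähler class rather than merely the positive cone, and that applying $-\id$ if needed does not destroy the non-symplectic nature — here one uses that $-\id$ acts trivially on $H^{2,0}$ only up to the issue that $23$ is odd, so in fact $h$ and $-h$ have different orders and only the order $23$ choice can arise from an automorphism; pinning this down cleanly is the delicate part, everything else being a routine application of the quoted structural theorems.
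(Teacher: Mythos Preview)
Your outline is the paper's proof: pick the $\zeta_{23}$-eigenvector $\omega\in S_\IC$ as the period, invoke surjectivity to produce $(X,\eta)$, check $\NS(X)=T$, and apply the Markman--Verbitsky Torelli theorem to lift $g$ to an automorphism. Three points deserve tightening. First, the signature argument shows that exactly one $g$-invariant real plane in $S_\IR$ is positive definite, but not that it is the one attached to the eigenvalue $\zeta_{23}$; the paper verifies $\langle\omega,\overline\omega\rangle>0$ by an explicit computation, whereas you should either do the same or simply take $\omega$ in whichever $\zeta_{23}^k$-eigenspace gives the positive plane (any primitive $k$ yields a non-symplectic action). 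Second, your argument for $\NS(X)=T$ is right in spirit but garbled as written: ``the $g$-orbit of $\omega$'' is just $\IC\omega$, so orthogonality to it is vacuous. The clean version is that $\NS(X)$ is $g$-stable (because $g$ fixes the period line), $S_\IQ$ is an irreducible $\IQ[g]$-module (its characteristic polynomial $\Phi_{23}$ is irreducible), and $\NS(X)_\IQ$ cannot contain $S_\IQ$ since $\omega\perp\NS(X)$; hence $\NS(X)_\IQ\subset T_\IQ$. The paper reaches the same conclusion via an explicit matrix computation that encodes exactly this linear-independence of $1,\xi,\dots,\xi^{21}$ over $\IQ$. Third, your closing worry about $-\id$ is unnecessary: $g$ fixes $T$ pointwise and the positive generator of $T=\langle46\rangle$ is ample (there are no negative-square, hence no wall, classes in $\NS(X)$), so $g$ already fixes a K\"ahler class; moreover $g$ has odd order and rotates the positive $3$-plane spanned by $t,\operatorname{Re}\omega,\operatorname{Im}\omega$, so it lies in $O^+(L)=\mathrm{Mon}^2(X)$ without any twist.
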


\begin{proof} The proof is an application of the surjecivity of the period map and of the global Torelli theorem for IHS manifolds.

\par{\it Construction of the variety.} Let $\cM_L^0$ be a connected component of the moduli space of pairs $(X,\eta)$ where $X$ is an IHS-$K3^{[2]}$ and $\eta\colon H^2(X,\IZ)\to L$ is an isometry. The period domain is
$$
\Omega_L\coloneqq \{\omega\in\IP(L_\IC)\,|\, \langle\omega,\omega\rangle_L=0,\langle\omega,\overline{\omega}\rangle_L>0\}.
$$
Recall that the period map $P\colon\cM_L^0\to\Omega_L$ defined by $P((X,\eta))=\eta(H^{2,0}(X))$ is surjective~\cite[Theorem~8.1]{Huybrechts}.

Consider as in Corollary~\ref{cor:isometryL} the embedding of $T=\langle 46\rangle$ in the lattice $L$ whose
 orthogonal complement is $S=E_8^{\oplus 2}\oplus U^{\oplus 2}\oplus K_{23}$, with the isometry $\varphi$ of order $23$ acting trivially on $T$.
We denote by $\omega$ a generator of the one-dimensional eigenspace of $S_\IC$ corresponding to the eigenvalue
$\xi\coloneqq \rm{e}^{\frac{2\rm{i}\pi}{23}}$. Recall that by construction $S$ is identified with the ring of integers $O_K$
of the cyclotomic field $K=\IQ(\zeta_{23})$ so that $\omega\in S_\IC=K\otimes_\IQ\IC$ with basis $(1,\zeta_{23},\ldots,\zeta_{23}^{21})$.
In this basis, the isometry $\varphi$ acts by the companion matrix of the $23$rd cyclotomic polynomial and it is
easy to check that up to a multiplicative constant one has
$$
\omega=\sum_{i=0}^{21}\left(\sum_{j=0}^{21-i}\xi^j\right)\zeta_{23}^{i}.
$$
Since $\varphi(\omega)=\xi\omega$ one has $\langle \omega,\omega\rangle_L=0$. Denoting by $\Tr_{K/\IQ}\colon K_\IC\to\IC$ the
$\IC$-linear extension of the trace, one has
$$
\langle \omega,\overline{\omega}\rangle_L=\langle \omega,\overline{\omega}\rangle_S=\Tr_{K/\IQ}(\omega\alpha\omega)
$$
where $\alpha$ is given in the proof of Proposition~\ref{prop:isometryS}. An explicit computation shows that $\Tr_{K/\IQ}(\omega\alpha\omega)>0$,
so $\omega\in \Omega_L$. 
By surjectivity of the period map, there exists $(X,\eta)\in\cM_L^0$ such that $\eta(H^{2,0}(X))=\omega$.
Then 
$$
\eta(\NS(X))=\left\{\lambda\in L\,|\,\langle \lambda,\omega\rangle_L=0\right\}\supset T.
$$

Let us show that $\eta(\NS(X))=T$. For this, we show that there is no element $\lambda\in S$ with the property that
$\langle\lambda,\omega\rangle_S=0$. In the basis $(1,\zeta_{23},\ldots,\zeta_{23}^{21})$, denoting $\Xi\coloneqq (\xi^{21},\ldots,\xi,1)$
and $J\coloneqq\left(\begin{matrix} 1 & & \text{\huge{$1$}} \\&\ddots &\\ \text{\huge{$0$}}& & 1\end{matrix}\right)$ one has 
by definition $\omega=J\Xi$. Denote by $M$ the matrix
of the lattice $S$ in the basis  $(1,\zeta_{23},\ldots,\zeta_{23}^{21})$ (see Appendix~\ref{app:matrice23}). For any $\lambda\in S$, since 
$S=O_K=\IZ[\zeta_{23}]$ the element $\lambda$ can be identified with a column vector with integer coordinates. Then
$$
\langle \lambda,\omega\rangle_S=\lambda^\top M\omega=\lambda^\top MJ\Xi.
$$
If $\lambda^\top MJ\Xi=0$, since $\lambda^\top MJ$ has integer coordinates and since the coordinates of $\Xi$ are
linearly independent over $\IQ$ it follows that $\lambda^\top MJ=0$. But the matrix $MJ$ is invertible, so $\lambda=0$.
This proves that $\NS(X)\cong T$ and in particular $X$ is projective \cite[Theorem~3.11]{Huybrechts}.

\par{\it Construction of the automorphism.} The isometry $\varphi$ preserves the space $H^{2,0}(X)=\IC\omega$ so it
is a Hodge isometry. Denoting by $q_X$ the Beauville-Bogomolov-Fujiki quadratic form on $H^2(X,\IZ)$, 
the posivite cone $C_X$ is defined as the connected component of the cone 
$$
\{x\in H^2(X,\IZ)\,|\, q_X(x)>0\}
$$
that contains the K\"ahler cone. By Markman~\cite[Lemma~9.2]{Markman} the group of monodromy operators 
of $H^2(X,\IZ)$ is equal to the group of isometries of $H^2(X,\IZ)$ preserving the positive cone $C_X$. 
Here the generator of $\NS(X)\cong T$ is an ample class so it lives in the K\"ahler cone and since
$\NS(X)$ is invariant by $\varphi$ the cone $C_X$ is preserved, so $\varphi$ is a monodromy operator
that leaves invariant a K\"ahler class. By the Global Torelli Theorem of Markman--Verbitsky~\cite[Theorem~1.3]{Markman}
there exists an automorphism $f$ of $X$ such that $f^\ast=\varphi$ on $H^2(X,\IZ)$. Since the natural map
$\Aut(X)\to O(H^2(X,\IZ))$ is injective (see for instance \cite[Lemma~1.2]{Mongardi} and references therein), $f$ is an order $23$ non-symplectic automorphism of $X$.
\end{proof}

\begin{remark}
Since $\NS(X)=\langle 46\rangle$, it follows from \cite[Theorem~2.2]{Markman} that $(X,\eta)$ is the only Hausdorff point in
the fiber $P^{-1}(\omega)$ of the period map so this variety with its automorphism of order $23$ is unique, although it belongs to a $20$-dimensional family of IHS-$K3^{[2]}$ polarized by a class of square $46$.
\end{remark}

\begin{remark}
The same method can be used to produce order $23$ automorphisms on deformations of $K3^{[n]}$ with $n\geq 3$,
under some arithmetic conditions on $n$.
\end{remark}

\appendix

\newpage

\section{Matrix of the lattice with an order $23$ isometry}
\label{app:matrice23}

Here is the matrix of the bilinear form on the lattice $U^{\oplus 2}\oplus E_8^{\oplus 2}\oplus K_{23}$,
written in a basis such that the order $23$ isometry of this lattice is the companion matrix of 
the cyclotomic polynomial $\Phi_{23}$:

\begin{center}
\begin{rotate}{-90}
$
\left(\begin{array}{rrrrrrrrrrrrrrrrrrrrrr}
-2 & 3 & 0 & 3 & 0 & 2 & 1 & 0 & -1
& -2 & -2 & -3 & -3 & -2 & -2 & -1 & 0
& 1 & 2 & 0 & 3 & 0 \\
3 & -2 & 3 & 0 & 3 & 0 & 2 & 1 & 0 &
-1 & -2 & -2 & -3 & -3 & -2 & -2 & -1 &
0 & 1 & 2 & 0 & 3 \\
0 & 3 & -2 & 3 & 0 & 3 & 0 & 2 & 1 &
0 & -1 & -2 & -2 & -3 & -3 & -2 & -2 &
-1 & 0 & 1 & 2 & 0 \\
3 & 0 & 3 & -2 & 3 & 0 & 3 & 0 & 2 &
1 & 0 & -1 & -2 & -2 & -3 & -3 & -2 & -2
& -1 & 0 & 1 & 2 \\
0 & 3 & 0 & 3 & -2 & 3 & 0 & 3 & 0 &
2 & 1 & 0 & -1 & -2 & -2 & -3 & -3 & -2
& -2 & -1 & 0 & 1 \\
2 & 0 & 3 & 0 & 3 & -2 & 3 & 0 & 3 &
0 & 2 & 1 & 0 & -1 & -2 & -2 & -3 & -3
& -2 & -2 & -1 & 0 \\
1 & 2 & 0 & 3 & 0 & 3 & -2 & 3 & 0 &
3 & 0 & 2 & 1 & 0 & -1 & -2 & -2 & -3
& -3 & -2 & -2 & -1 \\
0 & 1 & 2 & 0 & 3 & 0 & 3 & -2 & 3 &
0 & 3 & 0 & 2 & 1 & 0 & -1 & -2 & -2
& -3 & -3 & -2 & -2 \\
-1 & 0 & 1 & 2 & 0 & 3 & 0 & 3 & -2
& 3 & 0 & 3 & 0 & 2 & 1 & 0 & -1 &
-2 & -2 & -3 & -3 & -2 \\
-2 & -1 & 0 & 1 & 2 & 0 & 3 & 0 & 3
& -2 & 3 & 0 & 3 & 0 & 2 & 1 & 0 &
-1 & -2 & -2 & -3 & -3 \\
-2 & -2 & -1 & 0 & 1 & 2 & 0 & 3 & 0
& 3 & -2 & 3 & 0 & 3 & 0 & 2 & 1 & 0
& -1 & -2 & -2 & -3 \\
-3 & -2 & -2 & -1 & 0 & 1 & 2 & 0 & 3
& 0 & 3 & -2 & 3 & 0 & 3 & 0 & 2 & 1
& 0 & -1 & -2 & -2 \\
-3 & -3 & -2 & -2 & -1 & 0 & 1 & 2 & 0
& 3 & 0 & 3 & -2 & 3 & 0 & 3 & 0 & 2
& 1 & 0 & -1 & -2 \\
-2 & -3 & -3 & -2 & -2 & -1 & 0 & 1 & 2
& 0 & 3 & 0 & 3 & -2 & 3 & 0 & 3 & 0
& 2 & 1 & 0 & -1 \\
-2 & -2 & -3 & -3 & -2 & -2 & -1 & 0 & 1
& 2 & 0 & 3 & 0 & 3 & -2 & 3 & 0 & 3
& 0 & 2 & 1 & 0 \\
-1 & -2 & -2 & -3 & -3 & -2 & -2 & -1 &
0 & 1 & 2 & 0 & 3 & 0 & 3 & -2 & 3 &
0 & 3 & 0 & 2 & 1 \\
0 & -1 & -2 & -2 & -3 & -3 & -2 & -2 &
-1 & 0 & 1 & 2 & 0 & 3 & 0 & 3 & -2
& 3 & 0 & 3 & 0 & 2 \\
1 & 0 & -1 & -2 & -2 & -3 & -3 & -2 & -2
& -1 & 0 & 1 & 2 & 0 & 3 & 0 & 3 &
-2 & 3 & 0 & 3 & 0 \\
2 & 1 & 0 & -1 & -2 & -2 & -3 & -3 & -2
& -2 & -1 & 0 & 1 & 2 & 0 & 3 & 0 &
3 & -2 & 3 & 0 & 3 \\
0 & 2 & 1 & 0 & -1 & -2 & -2 & -3 & -3
& -2 & -2 & -1 & 0 & 1 & 2 & 0 & 3 &
0 & 3 & -2 & 3 & 0 \\
3 & 0 & 2 & 1 & 0 & -1 & -2 & -2 & -3
& -3 & -2 & -2 & -1 & 0 & 1 & 2 & 0
& 3 & 0 & 3 & -2 & 3 \\
0 & 3 & 0 & 2 & 1 & 0 & -1 & -2 & -2
& -3 & -3 & -2 & -2 & -1 & 0 & 1 & 2
& 0 & 3 & 0 & 3 & -2
\end{array}\right)
$
\label{tab:matrice23}
\end{rotate}
\end{center}

\newpage

\bibliographystyle{amsplain}
\bibliography{Biblio}

\end{document}